\newtheorem{thm}{Theorem}[section]
\newtheorem*{thm*}{Theorem}
\newtheorem{cor}[thm]{Corollary}
\newtheorem*{cor*}{Corollary}
\newtheorem{lem}[thm]{Lemma}
\newtheorem*{con*}{Conjecture}
\newtheorem*{prob*}{Problem}
\theoremstyle{definition}
\newtheorem{defn}[thm]{Definition}
\theoremstyle{remark}
\newcommand{\vc}[1]{ \mathbf{#1}}
\begin{document}
\author{Mathav Kishore Murugan}
\title{Generalized roundness of vertex transitive graphs}
\date{\today}
\email{mkm233@cornell.edu}
\address{Center for Applied Mathematics, Cornell University, Ithaca,NY 14853}

\begin{abstract}
We study the generalized roundness of finite metric spaces whose distance matrix $D$
has the property that every row of $D$ is a permutation of the first row. The analysis
provides a way to characterize subsets of the Hamming cube $\{ 0, 1 \}^{n}
\subset \ell_{1}^{(n)}$ ($n \geq 1$) that have strict $1$-negative type. The result can
be stated in two ways: a subset $S = \{ \vc{x}_0,\vc{x}_1,\ldots,\vc{x}_k \}$ of the Hamming
cube $\{ 0, 1 \}^{n} \subset \ell_{1}^{(n)}$ has generalized roundness one if and only
if the vectors $\{ \vc{x}_1 - \vc{x}_0,\vc{x}_2 - \vc{x}_0,\ldots,\vc{x}_k - \vc{x}_0 \}$
are linearly dependent in $\mathbb{R}^n$. Equivalently, $S$ has strict $1$-negative type
if and only if the vectors $\{ \vc{x}_1 - \vc{x}_0,\vc{x}_2 - \vc{x}_0,\ldots,\vc{x}_k - \vc{x}_0 \}$
are linearly independent in $\mathbb{R}^n$.
\end{abstract}

\maketitle

\section{Introduction}
This paper investigates the generalized roundness of finite metric spaces whose distance matrix $D$
has the property that every row of $D$ is a permutation of the first row. Examples of such metric spaces include
all vertex-transitive graphs (such as the Peterson graph, Cyclic graphs, Platonic solids, Archimedean
Solids, $K_{n,n}$, $K_n$, Hamming cubes, Cayley graphs of groups and so on). Motivation for our work
comes from recent papers of Wolf \cite{Wolf} and S\'{a}nchez \cite{Sanc}. All relevant background information
is given in Section \ref{sec0}.

In Section \ref{sec1}, we examine the generalized roundness of finite vertex-transitive graphs.
Given a finite vertex transitive graph $G$ endowed with the ordinary path
metric $\rho$ we characterize the cases of equality in the $\wp$-negative type inequalities of
$G$ where $\wp$ denotes the supremal $p$-negative type of $(G,\rho)$.
In Section \ref{sec2}, we use this result to provide a simple algebraic characterization of all
subsets of the Hamming cube $\{ 0, 1 \}^{n} \subset \ell_{1}^{(n)}$that have strict $1$-negative type.
It is worth noting that all metric subspaces of $L_{1}$-spaces have $1$-negative
type. (See, for example, \cite{Lenn}.)

\section{Negative Type and Generalized Roundness}\label{sec0}
The generalized roundness or, equivalently, supremal $p$-negative type of metric spaces
has been studied extensively in relation to isometric, uniform and coarse embedding problems
\cite{Scho, Enfl, Dran, Kell, Lafo}. However, computing the generalized roundness of even
simple infinite metric spaces can prove to be a very difficult task. The situation for finite metric
spaces is less drastic, at least in principal, due to recent advances by Wolf \cite{Wolf}
and S\'{a}nchez \cite{Sanc}. The relevant definitions are as follows.

\begin{defn}\label{NTGRDEF} Let $p \geq 0$ and let $(X,d)$ be a metric space. Then:

(a) $(X,d)$ has $p$-{\textit{negative type}} if and only if for
all finite subsets $\{x_{1}, \ldots , x_{n} \} \subseteq X$ ($n \geq 2$) and all choices of real numbers $\eta_{1},
\ldots, \eta_{n}$ with $\eta_{1} + \cdots + \eta_{n} = 0$, we have:

\begin{eqnarray*}
\sum\limits_{1 \leq i,j \leq n} d(x_{i},x_{j})^{p} \eta_{i} \eta_{j} & \leq & 0.
\end{eqnarray*}

(b) $(X,d)$ has \textit{strict} $p$-{\textit{negative type}} if and only if it has $p$-negative type
and the inequality in (a) is strict whenever the scalar $n$-tuple $(\eta_{1}, \ldots , \eta_{n}) \not=
(0, \ldots, 0)$.

(c) We say that $p$ is a \textit{generalized roundness exponent} for $(X,d)$, denoted by $p \in gr(X)$
or $p \in gr(X,d)$, if and only if for all natural numbers $n$,
and all choices of points $a_{1}, \ldots , a_{n}, b_{1}, \ldots , b_{n} \in X$, we have:

\begin{eqnarray*}\label{grinq}
\sum\limits_{1 \leq k < l \leq n} \left\{ d(a_{k},a_{l})^{p} + d(b_{k},b_{l})^{p} \right\} & \leq &
\sum\limits_{1 \leq j,i \leq n} d(a_{j},b_{i})^{p}.
\end{eqnarray*}

(d) We say that $p$ is a \textit{strict generalized roundness exponent} for $(X,d)$
if and only if the inequality in (c) is always strict.

(e) The \textit{generalized roundness} of $(X,d)$, denoted by $\mathfrak{q}(X)$ or
$\mathfrak{q}(X,d)$, is defined to be the supremum of the set of all generalized roundness exponents
of $(X,d)$:
\begin{eqnarray*}
\mathfrak{q}(X,d) & = & \sup \{ p : p \in gr(X,d) \}.
\end{eqnarray*}
\end{defn}

It is only relatively recently that metric spaces of strict $p$-negative ($p \geq 0$) have been
studied systematically. The notion is particularly significant in the context of finite metric
spaces. Hjorth \textit{et al}.\ \cite{Hjo2, Hjo1} examined finite metric spaces
of strict $1$-negative type and provided a wealth of examples, including all finite metric trees.
Much of the impetus for the present work concerned determining which subsets of the Hamming cube
$\{ 0, 1 \}^{n} \subset \ell_{1}^{(n)}$ have strict $1$-negative type. Theorem \ref{hc} provides,
in purely algebraic terms, a definitive answer to this question and figures as one of the main
results of this paper. Other papers that examine questions pertaining to strict $p$-negative type
include \cite{Dous, Hlaw, Sanc, Wolf}.

It turns out that generalized roundness and supremal $p$-negative type are equivalent.
The equivalences stated below are proven in \cite{Lenn, Dous}.

\begin{thm}\label{LTWthm}
Let $p \geq 0$ and let $(X,d)$ be a metric space. Then the following conditions are equivalent:
\begin{itemize}
\item[(1)] $(X,d)$ has (strict) $p$-negative type.

\item[(2)] $p$ is a (strict) generalized roundness exponent for $(X,d)$.
\end{itemize}
\end{thm}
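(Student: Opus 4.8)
The plan is to establish a dictionary between the weighted point-configurations appearing in the negative-type inequality of Definition \ref{NTGRDEF}(a) and the paired configurations appearing in the generalized-roundness inequality of Definition \ref{NTGRDEF}(c). The two inequalities are really the same quadratic inequality viewed from two angles: the roundness inequality is the negative-type inequality specialized to weights $\pm 1$, while the general negative-type inequality is recovered from the roundness inequality by clearing denominators and replicating points according to integer multiplicities. I would prove the non-strict equivalence first and then track the cases of equality to obtain the strict equivalence.

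For the implication (1) $\Rightarrow$ (2), I would start with arbitrary points $a_1,\dots,a_n,b_1,\dots,b_n \in X$ and regard the $2n$-tuple $(a_1,\dots,a_n,b_1,\dots,b_n)$ as a single labelled family carrying weight $\eta = +1$ on each $a_k$ and $\eta = -1$ on each $b_l$. Since there are equally many $a$'s and $b$'s, these weights sum to zero, so negative type applies. Expanding $\sum_{i,j} d(x_i,x_j)^p \eta_i \eta_j$ and using $d(x,x)^p = 0$ to discard the diagonal terms, the $a$-$a$ and $b$-$b$ blocks contribute $+2\sum_{k<l} d(a_k,a_l)^p$ and $+2\sum_{k<l} d(b_k,b_l)^p$, while the two cross blocks together contribute $-2\sum_{j,i} d(a_j,b_i)^p$. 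Thus the negative-type inequality reads exactly as twice the roundness inequality, which gives (2).

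For (2) $\Rightarrow$ (1), I would begin with distinct points $x_1,\dots,x_n$ and real weights $\eta_1,\dots,\eta_n$ summing to zero. Because the quadratic form $\eta \mapsto \sum_{i,j} d(x_i,x_j)^p \eta_i\eta_j$ is continuous and homogeneous of degree two, it suffices by density of the rationals and by scaling to treat the case of \emph{integer} weights. Splitting the indices into the positive set $P$ and the negative set $N$, and noting that $\sum_{i\in P}\eta_i = \sum_{i\in N}|\eta_i| =: S$ because the total is zero, I would form an $a$-list containing $\eta_i$ copies of $x_i$ for each $i \in P$ and a $b$-list containing $|\eta_i|$ copies of $x_i$ for each $i \in N$; both lists have length $S$. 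Applying the roundness inequality to these two $S$-tuples and re-expanding --- again using that distinct copies of a single point are at distance zero --- reassembles precisely $\sum_{i,j} d(x_i,x_j)^p \eta_i \eta_j \le 0$, which is (1).

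The delicate part, and the place where I expect the real work to lie, is the \emph{strict} equivalence, which requires following the equality cases through both reductions. In (1) $\Rightarrow$ (2) the $\pm 1$ weight vector is nonzero by construction, but coincidences among the chosen points (for instance when the multiset of $a$'s equals the multiset of $b$'s) collapse it to the zero weight vector once like points are merged, so strictness can only be asserted for genuinely non-degenerate configurations; isolating the correct non-degeneracy hypothesis is the main subtlety. Conversely, in (2) $\Rightarrow$ (1) one must verify that a nonzero real weight vector produces, after clearing denominators and replicating, a non-degenerate pair of lists, namely that the $a$- and $b$-multisets differ, so that strict roundness yields strict negative type. Once this bookkeeping is carried out carefully, strictness transfers in both directions and the proof is complete.
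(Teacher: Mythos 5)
The paper itself offers no proof of Theorem \ref{LTWthm}: it is quoted from the literature, with the non-strict equivalence due to \cite{Lenn} and the strict equivalence due to \cite{Dous}. Your argument for the \emph{non-strict} equivalence is correct and is essentially the classical proof of \cite{Lenn}: the $\pm 1$ specialization gives (1) $\Rightarrow$ (2), and the replication of points according to integer weights, followed by homogeneity and density of rational weight vectors in the hyperplane $H = \{\eta : \sum_i \eta_i = 0\}$, gives (2) $\Rightarrow$ (1). (One should also note, even in the non-strict case, that repeated points among the $a$'s and $b$'s must be merged with summed weights before Definition \ref{NTGRDEF}(a) applies, since that definition quantifies over \emph{subsets}; the merge leaves the quadratic form unchanged because $d(x,x)=0$.)

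The strict case, however, contains a genuine gap, and it is not the one you flagged. The direction strict (1) $\Rightarrow$ strict (2) is indeed only a matter of adopting the right non-degeneracy convention (the $a$- and $b$-multisets must differ; read literally, Definition \ref{NTGRDEF}(d) is unsatisfiable, since $n=1$, $a_1=b_1$ always yields equality), and with that convention your merging argument works. The problem is the direction strict (2) $\Rightarrow$ strict (1). Your reduction from real to integer weights goes through rational approximation and continuity, and \emph{passage to a limit does not preserve strict inequality}: strictness at every rational weight vector yields only the non-strict inequality at an irrational one. Your proposed fix --- that ``a nonzero real weight vector produces, after clearing denominators and replicating, a non-degenerate pair of lists'' --- is vacuous for a weight vector whose entries have irrational ratios: such a vector has no denominators to clear and is realized by no replication scheme. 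Concretely, since the form $Q(\eta)=\sum_{i,j}d(x_i,x_j)^p\eta_i\eta_j$ is negative semidefinite on $H$ once $p$-negative type is known, its equality set $K=\{\eta\in H : Q(\eta)=0\}=\{\eta \in H: D_p\eta \in \text{span}\{\mathbb{1}\}\}$ is a linear subspace determined by the (typically irrational) entries of $D_p$, and nothing in your argument excludes the possibility that $K\neq\{0\}$ while $K$ contains no nonzero rational vector --- in which case every unweighted roundness inequality with distinct multisets would be strict even though strict $p$-negative type fails. Handling this is the real content of the strict equivalence, and it is why \cite{Dous} and \cite{Hlaw} formulate the roundness side in terms of normalized simplices with arbitrary \emph{real} weights, for which the merging dictionary transfers equality cases exactly and no limiting argument is ever needed. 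As written, your proposal proves the non-strict equivalence but not the strict one.
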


\section{Generalized roundness of vertex-transitive graphs}\label{sec1}
Let $(X,d)$ be a finite metric space with points $\{x_1,x_2,\ldots,x_n\}$. Define the $p$-distance matrix as
$D_p=\left[ d(x_i,x_j) \right]_{i,j}$. Let $H$ denote the hyperplane
$\{ \alpha \in \mathbb{R}^n : \langle \alpha, \mathbb{1} \rangle =0 \}$ where $\mathbb{1}$ denotes
the vector of all ones and $\langle \cdot\, , \cdot \rangle$ denotes the standard inner product.

\begin{thm}\label{vt}
Let $(X,d)$ be a finite metric space with associated $p$-distance matrix $D_p$ such that each row of
$D_1$ is a permutation of the first row of $D_1$. Then we have:
\begin{itemize}
\item[(1)] $\mathfrak{q}(X,d)=\inf \{ p \ge 0: \det (D_p)=0 \}$.

\item[(2)] If $ \mathfrak{q}(X,d)=q < \infty$, we have that $\vc{u}^T D_q \vc{u} =0 , \vc{u} \in H$ if and only if
$D_q \vc{u} = \vc{0}  $.
\end{itemize}
\end{thm}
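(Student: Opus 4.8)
The plan is to exploit the permutation-row hypothesis to put $D_p$ into block form with respect to the orthogonal splitting $\mathbb{R}^n = \mathbb{R}\mathbb{1} \oplus H$, and then to read off both parts from the induced quadratic form on $H$. Throughout I take $D_p$ to be the matrix of entrywise $p$-th powers, so that $D_1$ is the ordinary distance matrix.

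First I would record the structural reduction. Since $d$ is symmetric each $D_p$ is a symmetric matrix, and since every row of $D_1$ is a permutation of its first row the same holds for $D_p$; hence every row of $D_p$ has the common sum $r_p := \sum_j d(x_1,x_j)^p$, which for $p>0$ is strictly positive and is exactly the eigenvalue of $D_p$ on $\mathbb{1}$, i.e. $D_p \mathbb{1} = r_p \mathbb{1}$. Because $D_p$ is symmetric with $\mathbb{1}$ as an eigenvector, the complement $H = \mathbb{1}^\perp$ is $D_p$-invariant, so writing $A_p := D_p|_H$ gives the block relation $\det D_p = r_p \det A_p$, together with $\vc{u}^T D_p \vc{u} = \vc{u}^T A_p \vc{u}$ and $D_p \vc{u} = A_p \vc{u}$ for all $\vc{u} \in H$. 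By Definition \ref{NTGRDEF} and Theorem \ref{LTWthm}, $(X,d)$ has (strict) $p$-negative type exactly when $A_p$ is negative semidefinite (resp. negative definite); and since $r_p > 0$, the condition $\det D_p = 0$ is equivalent to $A_p$ being singular.

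For part (1) I would argue as follows. By the finite-metric-space results of Wolf \cite{Wolf} and S\'anchez \cite{Sanc}, $(X,d)$ has strict $p$-negative type for every $0 \le p < \mathfrak{q}(X,d)$, and (when $\mathfrak{q} < \infty$) attains $\mathfrak{q}$-negative type. Strictness forces $A_p \prec 0$, hence $A_p$ nonsingular and $\det D_p \ne 0$, for all $p < \mathfrak{q}$; thus $\inf\{p \ge 0 : \det D_p = 0\} \ge \mathfrak{q}$. For the reverse inequality I use supremality: at $p = \mathfrak{q} < \infty$ we have $A_{\mathfrak{q}} \preceq 0$, and were $A_{\mathfrak{q}}$ nonsingular it would be negative definite, so by continuity of $p \mapsto A_p$ it would stay negative definite for $p$ in a right-neighborhood of $\mathfrak{q}$, contradicting that $\mathfrak{q}$ is the supremal negative-type exponent. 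Hence $A_{\mathfrak{q}}$ is singular, $\det D_{\mathfrak{q}} = 0$, and so $\inf\{p : \det D_p = 0\} \le \mathfrak{q}$, giving equality. When $\mathfrak{q} = \infty$ the space has all $p$-negative type, $A_p \prec 0$ for every $p$, the defining set is empty, and both sides equal $+\infty$. Part (2) is then a short linear-algebra consequence: with $\mathfrak{q} = q < \infty$, attainment gives $A_q \preceq 0$, and for a negative semidefinite symmetric operator the null cone of its quadratic form coincides with its kernel. Concretely, writing $-A_q = B^T B$, for $\vc{u} \in H$ one has $\vc{u}^T(-A_q)\vc{u} = \|B\vc{u}\|^2$, so $\vc{u}^T A_q \vc{u} = 0$ iff $B\vc{u} = \vc{0}$ iff $A_q \vc{u} = B^T B \vc{u} = \vc{0}$; translating through the reduction this reads $\vc{u}^T D_q \vc{u} = 0$ iff $D_q \vc{u} = \vc{0}$.

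The main obstacle is a hidden subtlety in part (1): a priori the (real-analytic) function $p \mapsto \det D_p$ could vanish at some $p^\ast < \mathfrak{q}$ where the top eigenvalue of $A_p$ merely touches $0$ from below in even order, without negative type being lost, which would make the infimum strictly smaller than $\mathfrak{q}$ and break the theorem. Ruling this out is precisely the assertion that $(X,d)$ has strict $p$-negative type throughout the open interval $[0,\mathfrak{q})$, so that the first determinantal zero is pinned to the supremal exponent; this is exactly where the cited finite-metric-space results of Wolf and S\'anchez do the essential work, and I would be careful to invoke the ``strict below the supremum, degenerate at the supremum'' dichotomy in that precise form. The remaining ingredients---the $H$-invariance reduction and the semidefinite null-cone identity---are routine once this point is secured.
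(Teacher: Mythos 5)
Your proposal is correct, but it takes a genuinely different route from the paper in both parts, so a comparison is worthwhile. For part (1) the paper does not use the ``strict below the supremum, attained at the supremum'' dichotomy at all: it observes, exactly as you do, that the permutation-row hypothesis makes $\mathbb{1}$ an eigenvector of $D_p$ with positive eigenvalue $\lambda$, but then simply notes that $\langle D_p^{-1}\mathbb{1},\mathbb{1}\rangle = n/\lambda \neq 0$ whenever $\det D_p \neq 0$ and quotes Corollary 2.4 of S\'anchez \cite{Sanc}, which yields the determinant formula in one step; your block decomposition $\mathbb{R}^n=\mathbb{R}\mathbb{1}\oplus H$ re-derives the content of that corollary in this special case from the dichotomy. (On attribution: the statement you need --- strict $p$-negative type on all of $[0,\mathfrak{q})$, with non-strict type attained at $\mathfrak{q}$ --- is really the Li--Weston result, Corollaries 4.2/4.3/5.11 of \cite{Hlaw}, which is what the paper itself invokes in the proof of Theorem \ref{hc}; Wolf and S\'anchez are not quite the right citations.) For part (2) the paper works harder than you do: it establishes that the spectrum of $D_q$ on $H$ is nonpositive by tracking eigenvalues continuously in $p$ from $p=0$ (where the spectrum is $n-1,-1,\dots,-1$) and using part (1) to rule out any sign change on $[0,q)$, and then expands $\vc{u}$ in an orthonormal eigenbasis to see that the null cone of the form equals the kernel. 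You obtain negative semidefiniteness of $D_q|_H$ in one stroke from attainment of the supremal exponent, and finish with the Gram-factorization version of the same null-cone-equals-kernel fact. Your route is cleaner precisely where the paper is most delicate (the eigenvalue-tracking argument), at the cost of importing the closedness of the negative-type interval from the literature; the paper's version is more self-contained at that point.

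One small thing to tighten: the backward implication of part (2) asserts that $D_q\vc{u}=\vc{0}$ by itself forces $\vc{u}\in H$, whereas your argument is quantified over $\vc{u}\in H$ throughout. This is immediate from your own setup --- writing $\vc{u}=c\mathbb{1}+\vc{h}$ with $\vc{h}\in H$, the invariance of the splitting gives $D_q\vc{u}=c\,r_q\mathbb{1}+A_q\vc{h}$, so $D_q\vc{u}=\vc{0}$ forces $c\,r_q=0$, hence $c=0$ --- but it should be said. (The paper handles it by noting $\mathbb{1}\in C(D_q)$ and $N(D_q)\perp C(D_q)$.)
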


\begin{proof}
(1) Let $X$ be a $n$-point metric space and $p \ge 0$ with $\det (D_p) \neq 0$. Note that by the condition on the
rows of $D_1$ we have that $ \mathbb{1}$ is an eigenvector of $D_p$ with eigenvalue $\lambda= \sum_{i=1}^n
d(x_1,x_i)^p > 0$. Consequently $D_p^{-1} \mathbb{1} = \frac{1}{\lambda} \mathbb{1}$ and therefore
$ \langle D_p^{-1} \mathbb{1}, \mathbb{1} \rangle =  \frac{n}{\lambda} \neq 0$. Thus $\det (D_p) \neq 0$
implies that $ \langle D_p^{-1} \mathbb{1}, \mathbb{1} \rangle =  \frac{n}{\lambda} \neq 0$. Therefore by
Corollary 2.4 of \cite{Sanc} we have that $\mathfrak{q}(X,d)=\inf \{ p \ge 0: \det (D_p)=0 \}$.

(2) ($\Leftarrow$)
 Let
$D_q \vc{u} = \vc{0}$. $\mathbb{1} \in C(D_q)$, since $\mathbb{1}$ is an eigenvector corresponding to
a non-zero eigenvalue. Since $\vc{u} \in N(D_q)$ and $\mathbb{1} \in C(D_q)$, we have that $\langle \vc{u},
\mathbb{1} \rangle =0 $. Thus $\vc{u} \in H$. Also $\vc{u}^T D_q \vc{u} =\vc{u}^T \vc{0} = 0$.

($\Rightarrow$) Let $\vc{u}^T D_q \vc{u} =0$ and $\vc{u} \in H$. Let $\lambda_1 \ge \lambda_2 \ge \ldots
\ge \lambda_n$ be the eigenvalues of $D_q$ and let $\vc{v_1},\vc{v_2},\ldots, \vc{v_n}$ be a corresponding
orthonormal system of eigenvectors (this is possible since $D_q$ is symmetric). Since all entries in $D_p$
are continuous functions of $p$, the characteristic polynomial has coefficients continuous in $p$. Since
the roots of polynomial are continuous function of the coefficients, we have that the eigenvalues of $D_p$
are continuous functions of $p$. Note that $D_0$ has eigenvalues $n-1, -1 , -1 , \ldots, -1$ and that
$q = \inf \{ p \ge 0: \det (D_p)=0 \}$. Note that $\mathbb{1}$ is an eigenvector of $D_p$ with eigenvalue
$\sum_{i=1}^n d(x_1,x_i)^p > 0$. Since determinant of $D_p$ is zero if and only if an eigenvalue is zero,
by continuity of eigenvalues over $p$ and the fact that the first eigenvalue of $n-1$ of $D_0$ continuously
increases as $p$ increases (it always corresponds to the eigenvector $\mathbb{1}$), we have that $\lambda_1 > 0$,
$\lambda_2 = 0$, $\lambda_i \le 0$ for $i=3,\ldots,n$. Let $s$ be such that $ \lambda_1 > 0$ ,$\lambda_2 =
\ldots = \lambda_s = 0$ and $0>\lambda_{s+1} \ge \lambda_{s+2} \ge \ldots \ge \lambda_n$. Note that we can
choose $\vc{v_1} = \frac{1}{\sqrt{n}} \mathbb{1}$. Since $\vc{v_1},\vc{v_2},\ldots, \vc{v_n}$ are orthonormal
we have $\{\vc{v_2},\ldots, \vc{v_n} \}$ is a basis for $H$. Therefore there exists scalars $a_2,\ldots,a_n$
such that $\vc{u} = \sum_{i=2}^n a_i \vc{v_i}$. Note that
\begin{eqnarray*}
0 &=& \vc{u}^T D_q \vc{u} \\
& =& \left( \sum_{i=2}^n a_i \vc{v_i}^T \right) \left( \sum_{i=2}^n a_i \lambda_i \vc{v_i} \right) \\
&= & \sum_{i=2}^n a_i^2 \lambda_i \\
& = &  \sum_{i=s+1}^n a_i^2 \lambda_i.
\end{eqnarray*}
Since $ \lambda_i < 0$ and $a_i^2 \ge 0$ for all $i=s+1,\ldots,n$, we have that $a_i=0$ for all $i=s+1,\ldots,n$.
Thus $\vc{u}=  \sum_{i=2}^s a_i \vc{v_i}$. Therefore we have $D_q \vc{u} = \sum_{i=2}^s a_i \lambda_i \vc{v_i} = \vc{0}$.
\end{proof}

\section{Negative Type and the Hamming Cube}\label{sec2}
To illustrate an application of Theorem \ref{vt}, we turn now to an analysis of the negative type of
subsets of the Hamming cube. In order to do this we need to introduce some notation and develop two
technical lemmas. The main result then appears as Theorem \ref{hc}. This theorem completely
characterizes the subsets of the Hamming cube that have strict $1$-negative type.

Define $H_n = \{ 0 ,1 \}^n \subset l_1^{(n)}$ to be the $n$-dimensional Hamming cube.
Let $H_n = \{ b_{n,i}: i=0,1,\ldots,2^n -1 \}$ where $b_{n,i} \in H_n$ is such that $b_{n,i}$ is
the $n$-digit binary representation of $i$. Let $D_n$ be a $2^n \times 2^n$ distance matrix of $H_n$
such that $D_n= [ d(b_{n,i},b_{n,j}) ] _ { 0 \le i,j \le 2^n -1}$. Let $O_n$ be the
$2^n \times 2^n$ matrix whose entries are all 1.

Let $ \mathbb{1}_{i,j}$, $0 \le j \le i$ denote a $2^i \times 1$ vector such that the first $2^j$
coordinates of $ \mathbb{1}_{i,j}$ is $1$, the second $2^j$ coordinates is $-1$, the third $2^j$
coordinates is $1$ and so on. Let $A_n$ be a $ (n+1) \times 2^n $ matrix where the rows of $A_n$
are $\mathbb{1}_{n,n}^T,\mathbb{1}_{n,n-1}^T,\ldots, \mathbb{1}_{n,0}^T$.

\begin{lem} \label{lm1}
\begin{itemize}
Let $n \in \mathbb{N}$. Then:
\item[(1)]
\begin{eqnarray*}
D_1 &=& \begin{bmatrix} 0 & 1 \\ 1 & 0 \end{bmatrix}, \text{ and } \\
D_{n+1} &=& \begin{bmatrix} D_n & D_n + O_n \\ D_n + O_n & D_n \end{bmatrix}.
\end{eqnarray*}

\item[(2)] $D_n \mathbb{1}_{n,n} = n 2^{n-1} \mathbb{1}_{n,n}$, and
$D_n \mathbb{1}_{n,i} = - 2^{n-1} \mathbb{1}_{n,i}$
for all $i=0,1,\ldots,n-1$. \\

\item[(3)] Let $V_n = \text{Span} \{ \mathbb{1}_{n,i} : i=0,1,\ldots,n \}$. Then $C(D_n) = V_n$. \\

\item[(4)] $N(A_n)=N(D_n) = V_n^ \perp$.
\end{itemize}
\end{lem}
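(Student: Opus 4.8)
The plan is to prove the four parts in sequence, using the recursion of (1) and the eigen-relations of (2) as the engine that drives (3) and (4).

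For (1) I would argue directly from the binary indexing. Writing an $(n+1)$-digit label $i$ as its leading bit together with its $n$ trailing bits, the indices $0,\ldots,2^{n}-1$ are exactly the labels with leading bit $0$ and the indices $2^{n},\ldots,2^{n+1}-1$ those with leading bit $1$; in each range the trailing $n$ bits run through $b_{n,0},\ldots,b_{n,2^{n}-1}$ in order. Hamming distance splits additively into the contribution of the leading bit plus that of the trailing $n$ bits. Two labels in the same half agree in the leading bit, so their distance is the trailing-bit distance, producing the diagonal blocks $D_n$; two labels in opposite halves differ in the leading bit, contributing an extra $1$ to every entry, producing the off-diagonal blocks $D_n+O_n$. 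The base case $D_1=\begin{bmatrix}0&1\\1&0\end{bmatrix}$ is immediate.

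For (2) I would induct on $n$ via the block recursion. The key preliminary is how the sign vectors restrict to halves: for $0\le i\le n-1$ the block pattern of $\mathbb 1_{n+1,i}$ has even period inside each half, so $\mathbb 1_{n+1,i}=\begin{bmatrix}\mathbb 1_{n,i}\\ \mathbb 1_{n,i}\end{bmatrix}$; for $i=n$ the two halves carry opposite constant signs, so $\mathbb 1_{n+1,n}=\begin{bmatrix}\mathbb 1_{n,n}\\ -\mathbb 1_{n,n}\end{bmatrix}$; and $\mathbb 1_{n+1,n+1}=\begin{bmatrix}\mathbb 1_{n,n}\\ \mathbb 1_{n,n}\end{bmatrix}$. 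I would also record $O_n\mathbb 1_{n,i}=\vc 0$ for $i<n$ (balanced vector) and $O_n\mathbb 1_{n,n}=2^{n}\mathbb 1_{n,n}$. Substituting these into $D_{n+1}=\begin{bmatrix}D_n&D_n+O_n\\ D_n+O_n&D_n\end{bmatrix}$ and invoking the inductive eigen-relations, each of the three cases collapses to the asserted eigenvalue: $-2^{n}$ for $i\le n$ and $(n+1)2^{n}$ for $i=n+1$. (Alternatively, the single identity $D_n=\tfrac n2\,\mathbb 1_{n,n}\mathbb 1_{n,n}^{T}-\tfrac12\sum_{m=0}^{n-1}\mathbb 1_{n,m}\mathbb 1_{n,m}^{T}$, obtained by writing the bit-$m$ distance matrix as a Kronecker product with the swap matrix $\begin{bmatrix}0&1\\1&0\end{bmatrix}=\tfrac12(\vc 1\vc 1^{T}-\vc w\vc w^{T})$ in slot $m$ and the $2\times2$ all-ones matrix $\vc 1\vc 1^{T}$ elsewhere, yields (2) and (3) simultaneously.)

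For (3), part (2) shows each $\mathbb 1_{n,i}$ is an eigenvector with nonzero eigenvalue and hence lies in $C(D_n)$; since the $\mathbb 1_{n,i}$ are pairwise orthogonal (a one-line Walsh computation, or a further induction), they are independent, so $V_n\subseteq C(D_n)$ with $\dim V_n=n+1$. The substantive point, and the step I expect to be the main obstacle, is the reverse inclusion: that $D_n$ has no further nonzero eigenvalues, so that $\operatorname{rank}D_n=n+1$ forces $C(D_n)=V_n$. I would obtain this by induction through the orthogonal change of variables $\begin{bmatrix}x\\ y\end{bmatrix}\mapsto\begin{bmatrix}x+y\\ x-y\end{bmatrix}$, under which $D_{n+1}$ block-diagonalizes as $2D_n+O_n$ on the symmetric subspace and $-O_n$ on the antisymmetric subspace, so that $\operatorname{rank}D_{n+1}=\operatorname{rank}(2D_n+O_n)+1$. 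The inductive hypothesis $C(D_n)=V_n$ together with $\mathbb 1_{n,n}\in V_n$ gives $C(2D_n+O_n)\subseteq V_n$, whence $\operatorname{rank}(2D_n+O_n)\le n+1$; the eigenvectors $\mathbb 1_{n,i}$ force equality, so $\operatorname{rank}D_{n+1}=n+2$, as required.

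For (4), symmetry of $D_n$ gives $N(D_n)=C(D_n)^{\perp}$, which equals $V_n^{\perp}$ by (3). Finally, the rows of $A_n$ are exactly the spanning vectors $\mathbb 1_{n,i}$ of $V_n$, so $A_n x=\vc 0$ holds iff $x$ is orthogonal to every $\mathbb 1_{n,i}$, i.e.\ iff $x\in V_n^{\perp}$; hence $N(A_n)=V_n^{\perp}=N(D_n)$.
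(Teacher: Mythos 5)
Your proposal is correct. Parts (1), (2), (4), and the inclusion $V_n \subseteq C(D_n)$ in (3) proceed exactly as in the paper: the four-case binary-digit computation for the block recursion, induction on $n$ using $O_n\mathbb{1}_{n,n}=2^n\mathbb{1}_{n,n}$, $O_n\mathbb{1}_{n,i}=\vc{0}$ for $i<n$, and the half-splitting of the sign vectors, and, for (4), symmetry of $D_n$ together with the fact that the rows of $A_n$ span $V_n$. The genuine difference is in the reverse inclusion $C(D_n)\subseteq V_n$, which is the substantive step of (3). The paper proves it by a second induction on the columns of $D_n$: via the block recursion, each column $\vc{c}_{n+1,i}$ is a stack of two copies of $\vc{c}_{n,i}$ plus a vector with constant entries supported on one half, and the paper writes down explicit coefficients expressing this in the spanning set $\{\mathbb{1}_{n+1,j}\}_{j=0}^{n+1}$. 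You instead conjugate by the orthogonal matrix $\frac{1}{\sqrt{2}}\begin{bmatrix} I & I \\ I & -I \end{bmatrix}$, which turns $D_{n+1}$ into $\operatorname{diag}(2D_n+O_n,\,-O_n)$, and count ranks: the induction hypothesis bounds $\operatorname{rank}(2D_n+O_n)$ above by $n+1$, the eigenvector relations give the matching lower bound, so $\operatorname{rank}D_{n+1}=n+2$, and $C(D_{n+1})=V_{n+1}$ follows by comparing dimensions. Both arguments are sound. The trade-off: the paper's column expansion is entirely elementary and never needs the linear independence of the $\mathbb{1}_{n,i}$ (it only uses that they span $V_n$), whereas your dimension count does require their independence — which you correctly flag and justify via Walsh orthogonality. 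In return, your route avoids the coefficient bookkeeping (the paper's explicit formulas in fact contain small index slips, writing $\mathbb{1}_{n,i}$ where $\mathbb{1}_{n,j}$ is meant, which your argument never has to touch) and yields extra structural information for free, namely $\operatorname{rank}D_n=n+1$ and the recursive block diagonalization of $D_{n+1}$.
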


\begin{proof} (1) Since $H_1 = \{0 ,1 \}$ under the $l_1$ metric, we have that
$D_1 = \begin{bmatrix} 0 & 1 \\ 1 & 0 \end{bmatrix}$. If both $i,j < 2^n$,
then $b_{n+1,i} = (0 , b_{n,i}) , b_{n+1,j}= (0, b_{n,j})$, and so
\begin{eqnarray*}
d(b_{n+1,i}, b_{n+1,j}) &=& |0- 0 | + d(b_{n,i},b_{n,j}) \\
&=&  d(b_{n,i},b_{n,j}).
\end{eqnarray*}
If $i < 2^n$ and $j \ge 2^n$, then $b_{n+1,i} = (0 , b_{n,i}) , b_{n+1,j}= (1, b_{n,j-2^n})$, and so
\begin{eqnarray*}
d(b_{n+1,i}, b_{n+1,j}) &=& |0- 1 | + d(b_{n,i},b_{n,j-2^n}) \\
&=& 1+ d(b_{n,i},b_{n,j-2^n}).
\end{eqnarray*}
If $i \ge 2^n$ and $j < 2^n$, then $b_{n+1,i} = (1 , b_{n,i-2^n}) , b_{n+1,j}= (0, b_{n,j})$, and so
\begin{eqnarray*}
d(b_{n+1,i}, b_{n+1,j}) &=& |1- 0 | + d(b_{n,i-2^n},b_{n,j}) \\
&=& 1+ d(b_{n,i-2^n},b_{n,j}).
\end{eqnarray*}
If both $i,j \ge 2^n$, then $b_{n+1,i} = (1 , b_{n,i-2^n}) , b_{n+1,j}= (1, b_{n,j-2^n})$, and so
\begin{eqnarray*}
d(b_{n+1,i}, b_{n+1,j}) &=& |1- 1 | + d(b_{n,i},b_{n,j}) \\
&=&  d(b_{n,i-2^n},b_{n,j-2^n}).
\end{eqnarray*}
The conclusion follows from the above set of $4$ equations.

(2) We prove this by induction. For $n=1$ the result holds because
\begin{eqnarray*}
\begin{bmatrix} 0 & 1 \\1 & 0 \end{bmatrix} \begin{bmatrix} 1 \\ 1\end{bmatrix} &=& \begin{bmatrix} 1  \\1 \end{bmatrix}, \text{ and} \\
\begin{bmatrix} 0 & 1 \\1 & 0 \end{bmatrix} \begin{bmatrix} 1 \\ -1\end{bmatrix} &=& - \begin{bmatrix} 1 \\-1 \end{bmatrix}.
\end{eqnarray*}
By the induction hypothesis we have $D_n\mathbb{1}_{n,n}= n 2^{n-1} \mathbb{1}_{n,n}$.
Note that $O_n\mathbb{1}_{n,n}=2^n\mathbb{1}_{n,n}$. Then,
\begin{eqnarray*}
D_{n+1} \mathbb{1}_{n+1,n+1} &=&  \begin{bmatrix} D_n & D_n + O_n \\ D_n + O_n & D_n \end{bmatrix}
\begin{bmatrix} \mathbb{1}_{n,n} \\ \mathbb{1}_{n,n} \end{bmatrix} \\
&=& \begin{bmatrix} 2 D_n\mathbb{1}_{n,n} + O_n\mathbb{1}_{n,n} \\
2 D_n\mathbb{1}_{n,n} + O_n\mathbb{1}_{n,n}  \end{bmatrix} \\
&=& \begin{bmatrix}  n 2^{n}\mathbb{1}_{n,n} + 2^n\mathbb{1}_{n,n} \\
n 2^{n}\mathbb{1}_{n,n} + 2^ n\mathbb{1}_{n,n}   \end{bmatrix} \\
&=&  (n+1 )2^{n} \mathbb{1}_{n+1,n+1}, \text{ and}
\end{eqnarray*}

\begin{eqnarray*}
D_{n+1} \mathbb{1}_{n+1,n} &=&  \begin{bmatrix} D_n & D_n + O_n \\ D_n + O_n & D_n \end{bmatrix}
\begin{bmatrix} \mathbb{1}_{n,n} \\ -\mathbb{1}_{n,n} \end{bmatrix} \\
&=& \begin{bmatrix}  D_n\mathbb{1}_{n,n} -D_n\mathbb{1}_{n,n} - O_n\mathbb{1}_{n,n} \\
D_n\mathbb{1}_{n,n} + O_n\mathbb{1}_{n,n} -D_n\mathbb{1}_{n,n} \end{bmatrix} \\
&=& \begin{bmatrix}  - 2^n\mathbb{1}_{n,n}   \\ 2^ n\mathbb{1}_{n,n}   \end{bmatrix} \\
&=&  - 2^{n} \mathbb{1}_{n+1,n}.
\end{eqnarray*}
For $i=0,\ldots,n-1$, by the induction hypothesis we have $D_n \mathbb{1}_{n,i} = - 2^{n-1} \mathbb{1}_{n,i} $.
Note that $O_n \mathbb{1}_{n,i} = \vc{0}$. Then,
\begin{eqnarray*}
D_{n+1} \mathbb{1}_{n+1,i} &=&  \begin{bmatrix} D_n & D_n + O_n \\ D_n + O_n & D_n \end{bmatrix}
\begin{bmatrix} \mathbb{1}_{n,i} \\ \mathbb{1}_{n,i} \end{bmatrix} \\
&=& \begin{bmatrix}  D_n\mathbb{1}_{n,i} + D_n\mathbb{1}_{n,i} + O_n\mathbb{1}_{n,i} \\
D_n\mathbb{1}_{n,n} + O_n\mathbb{1}_{n,n} + D_n\mathbb{1}_{n,n} \end{bmatrix} \\
&=& \begin{bmatrix}  - 2^n\mathbb{1}_{n,i}   \\ -2^ n\mathbb{1}_{n,i}   \end{bmatrix} \\
&=&  - 2^{n} \mathbb{1}_{n+1,i}.
\end{eqnarray*}
The result follows by induction.

(3) By the previous part $\mathbb{1}_{n,i}$ is an eigenvector of $D_n$ corresponding to a non-zero eigenvalue
for $i=0,1,\ldots,n$. Therefore $\mathbb{1}_{n,i} \in C(D_n)$ for $i=0,1,\ldots,n$. Thus $V_n \subseteq C(D_n)$.
It suffices to show that every column of $D_n$ is in $V_n$. We show this by induction. The result is true for
$n=1$ since $V_1= C(D_1)= \mathbb{R}^2$. Let $\vc{c}_{n,i}$ denote the $i$-th column of $D_n$ for $i=0,1,\ldots,2^n-1$.
By the induction hypothesis $ \vc{c}_{n,i} \in V_n$ for all $i=0,1,\ldots,2^n-1$, \emph{i.e.} there exists scalars
$a_{i,j}$ such that $ \vc{c}_{n,i}= \sum_{j=0}^{n} a_{i,j} \mathbb{1}_{n,i}$ for all $i=0,1,\ldots,2^n-1$.
Let $i < 2^n$. Then by part (1) we have
  \begin{eqnarray*}
  \vc{c}_{n+1,i} &=& \begin{bmatrix} \vc{c}_{n,i} \\ \vc{c}_{n,i} + \mathbb{1}_{n,n} \end{bmatrix}\\
  &=& \begin{bmatrix} \vc{c}_{n,i} \\ \vc{c}_{n,i} \end{bmatrix}+  \begin{bmatrix} \vc{0} \\ \mathbb{1}_{n,n} \end{bmatrix}\\
  &=& \sum_{j=0}^{n-1} a_{i,j} \mathbb{1}_{n+1,i}+ (a_{i,n} + 0.5) \mathbb{1}_{n+1,n+1} - 0.5 \mathbb{1}_{n+1,n}.
  \end{eqnarray*}
If $i \ge 2^n$, then by part (1) we have
\begin{eqnarray*}
  \vc{c}_{n+1,i} &=& \begin{bmatrix} \vc{c}_{n,i}+ \mathbb{1}_{n,n} \\ \vc{c}_{n,i}  \end{bmatrix}\\
  &=& \begin{bmatrix} \vc{c}_{n,i} \\ \vc{c}_{n,i} \end{bmatrix}+  \begin{bmatrix}\mathbb{1}_{n,n} \\\vc{0} \end{bmatrix}\\
  &=& \sum_{j=0}^{n-1} a_{i,j} \mathbb{1}_{n+1,i}+ (a_{i,n} - 0.5) \mathbb{1}_{n+1,n+1} + 0.5 \mathbb{1}_{n+1,n}.
\end{eqnarray*}

Thus $c_{n+1,i} \in V_n$ for all $i=0,1,\ldots,2^{n+1}-1$. The result follows by induction.

(4) Since $D_n$ is symmetric we have by part (3), $N(D_n)= V_n^\perp$. Since the rows of $A_n$ spans $V_n$,
we have $C(A_n^T) = V_n$. Therefore $N(A_n) = V_n^\perp$.
\end{proof}

Let $M_n$ be a $(n+1) \times (n+1)$ matrix, defined as
\[ M_n = \begin{bmatrix} 1 &0& 0& \ldots& 0 \\
1 &-2& 0& \ldots& 0\\
1& 0& -2& \ldots& \vdots \\
\vdots & 0 & 0 &\ddots  & 0 \\
1 & 0 &  \ldots & 0 & -2
\end{bmatrix}.
\]
Let $B_n$ be a $(n+1) \times 2^n$ matrix, whose $i$-th column is $ \vc{r}_i = \begin{bmatrix} 1 \\ b_{n,i} \end{bmatrix}$,
for $i=0,1,\ldots,2^n-1$.
\begin{lem} \label{lm2}
$M_n$ is invertible and $M_n B_n = A_n$.
\end{lem}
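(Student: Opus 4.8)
The plan is to treat the two assertions separately, disposing of the invertibility of $M_n$ at once and then reducing the matrix identity $M_n B_n = A_n$ to an entry-by-entry verification governed by a single elementary identity. For invertibility, I would simply note that $M_n$ is lower triangular: its only nonzero entries lie in the first column and on the main diagonal. Hence $\det M_n$ is the product of its diagonal entries, namely $1 \cdot (-2)^n = (-2)^n \neq 0$, so $M_n$ is invertible and nothing further is required for this half.

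For the identity $M_n B_n = A_n$, I would argue columnwise, fixing $i \in \{0,1,\ldots,2^n-1\}$ and comparing the $i$-th columns of the two sides. The $i$-th column of $B_n$ is $\vc{r}_i = \begin{bmatrix} 1 \\ b_{n,i} \end{bmatrix}$, and because $M_n$ is so sparse the product $M_n \vc{r}_i$ is immediate: the top entry equals $1$, while for each $k \ge 2$ the $k$-th entry equals $1 - 2s$, where $s$ is the binary digit of $i$ sitting in the corresponding slot of $b_{n,i}$. The crux is the elementary observation that $1 - 2s = (-1)^s$ for $s \in \{0,1\}$, which converts the affine expression produced by $M_n$ into a sign $\pm 1$.

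On the other side, I would read off the $i$-th column of $A_n$. Since the rows of $A_n$ are $\mathbb{1}_{n,n}^T, \mathbb{1}_{n,n-1}^T, \ldots, \mathbb{1}_{n,0}^T$, the $k$-th entry of that column is the $i$-th coordinate of $\mathbb{1}_{n,n-k+1}$; from the block structure of $\mathbb{1}_{n,j}$ (alternating blocks of length $2^j$) this coordinate equals $(-1)^{\lfloor i/2^{n-k+1}\rfloor}$, i.e.\ $(-1)$ raised to the $(n-k+1)$-th binary digit of $i$ (and for $k=1$ this is the always-zero leading digit, giving the constant $1$). Matching these digits against those appearing in $\vc{r}_i$ then forces $M_n \vc{r}_i$ and the $i$-th column of $A_n$ to agree in every entry, which yields the identity. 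The main obstacle is not the algebra but the index bookkeeping: one must carefully align the ordering of the rows of $A_n$ (blocks from largest to smallest, $\mathbb{1}_{n,n}$ down to $\mathbb{1}_{n,0}$) with the order in which the digits of $i$ appear in $b_{n,i}$, and confirm that $(\mathbb{1}_{n,j})_i = (-1)^{\lfloor i/2^j\rfloor}$ is exactly $(-1)$ to the appropriate digit under the most-significant-bit-first convention. Once this correspondence is pinned down, the identity $1 - 2s = (-1)^s$ closes the argument.
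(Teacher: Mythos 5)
Your proof is correct, and it differs from the paper's in instructive ways on both halves. For invertibility, the paper does not use triangularity: it exhibits the standard basis explicitly in terms of the columns $\vc{c}_1,\ldots,\vc{c}_{n+1}$ of $M_n$ (namely $\vc{e}_1 = \vc{c}_1 + 0.5(\vc{c}_2+\cdots+\vc{c}_{n+1})$ and $\vc{e}_i = -0.5\,\vc{c}_i$ for $i \geq 2$), which in effect constructs $M_n^{-1}$ by hand; your observation that $M_n$ is lower triangular with diagonal $1,-2,\ldots,-2$, so $\det M_n = (-2)^n \neq 0$, is shorter and arguably cleaner, though it yields slightly less (the paper's version hands you the inverse, which is never needed later anyway). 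For the identity $M_n B_n = A_n$, the paper simply asserts in one line that the $i$-th column of $A_n$ equals $M_n \begin{bmatrix} 1 \\ b_{n,i} \end{bmatrix}$ and stops; your columnwise verification --- computing the $k$-th entry of $M_n \vc{r}_i$ as $1 - 2s = (-1)^s$ with $s$ the relevant binary digit, and matching it against $(\mathbb{1}_{n,n-k+1})_i = (-1)^{\lfloor i/2^{n-k+1} \rfloor}$ via the parity of $\lfloor i/2^j \rfloor$ --- is precisely the computation hiding behind that assertion, so here you are not taking a different route so much as supplying the details the paper omitted. Your attention to the most-significant-bit-first convention is warranted: it is exactly the bookkeeping point on which a careless version of this check would go wrong, and it is consistent with the recursion $b_{n+1,i} = (0, b_{n,i})$ or $(1, b_{n,i-2^n})$ used in Lemma \ref{lm1}.
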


\begin{proof} Let $\vc{c}_1,\ldots,\vc{c}_{n+1}$ be the columns of $M_n$. Note that the standard basis
$\vc{e}_1,\ldots,\vc{e}_{n+1}$ can be obtained as $\vc{e_1} =  \vc{c}_1 + 0.5 ( \vc{c}_2 + \ldots + \vc{c}_{n+1})$,
$\vc{e}_i= - 0.5 \vc{c}_i$ for $i=2,\ldots,n+1$. Thus the columns of $M_n$ are linearly independent and hence
$M_n$ is invertible. Note that the $i$-th column of $A_n$ is $M_n \begin{bmatrix} 1 \\ b_{n,i} \end{bmatrix}$,
for $i=0,1,\ldots,2^n-1$. This shows $M_n B_n = A_n$.
\end{proof}

Let $\vc{q}_{i}$ be the columns of $A_n$ and $\vc{r}_{i}$ be the columns of $B_n$ for $i=0,1,\ldots,2^n -1 $.

\begin{thm} \label{hc}
Let $S= \{ \vc{x}_0,\vc{x}_1,\ldots,\vc{x}_k \} \subseteq H_n$. Then 
$S$ has generalized roundness one if and only if
$\{ \vc{x}_1 - \vc{x}_0,\vc{x}_2 - \vc{x}_0,\ldots,\vc{x}_k - \vc{x}_0 \}$ are linearly dependent in $\mathbb{R}^n$.
Equivalently, $S$ has strict $1$-negative type
if and only if $\{ \vc{x}_1 - \vc{x}_0,\vc{x}_2 - \vc{x}_0,\ldots,\vc{x}_k - \vc{x}_0 \}$
are linearly independent in $\mathbb{R}^n$.
\end{thm}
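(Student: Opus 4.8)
The plan is to reduce the whole question to the kernel of a single quadratic form, which the two lemmas have already computed in linear-algebraic terms. Fix an enumeration so that $\vc{x}_j = b_{n,i_j}$ and let $I = \{i_0,\ldots,i_k\}$ be the corresponding index set. I would work throughout with vectors $\eta \in \mathbb{R}^{2^n}$ supported on $I$, so that $\eta^T D_n \eta$ is exactly the $1$-negative-type form attached to $S$ and the constraint $\langle \eta,\mathbb{1}\rangle = 0$ reads $\sum_j \eta_{i_j}=0$, i.e. $\eta \in H$. Since $S \subseteq H_n \subseteq \ell_1^{(n)}$ has $1$-negative type, Definition \ref{NTGRDEF}(a) gives $\eta^T D_n \eta \le 0$ for all such $\eta \in H$; hence $S$ \emph{fails} to have strict $1$-negative type precisely when there is a nonzero $\eta$, supported on $I$, with $\eta \in H$ and $\eta^T D_n \eta = 0$.

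First I would characterize these equality vectors. By Lemma \ref{lm1}(2) the symmetric matrix $D_n$ has a one-dimensional positive eigenspace $\mathbb{R}\mathbb{1}=\mathbb{R}\mathbb{1}_{n,n}$, a strictly negative eigenspace $E_-$ spanned by $\mathbb{1}_{n,0},\ldots,\mathbb{1}_{n,n-1}$, and null space $N(D_n)=V_n^\perp$ by Lemma \ref{lm1}(3),(4); as eigenspaces of a symmetric matrix for distinct eigenvalues, $\mathbb{R}\mathbb{1}$, $E_-$ and $N(D_n)$ are mutually orthogonal. For $\eta \in H = \mathbb{1}^\perp$ the positive direction drops out, so writing $\eta = \eta_- + \eta_0$ with $\eta_- \in E_-$ and $\eta_0 \in N(D_n)$ gives $\eta^T D_n \eta = -2^{n-1}\|\eta_-\|^2 \le 0$, with equality if and only if $\eta_- = 0$, i.e. $\eta \in N(D_n)$; this reproves the relevant case of Theorem \ref{vt}(2). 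Now Lemma \ref{lm1}(4) identifies $N(D_n)=N(A_n)$, and Lemma \ref{lm2}, through the invertibility of $M_n$ and $A_n = M_n B_n$, gives $N(A_n)=N(B_n)$. Thus the equality vectors in $H$ are exactly the elements of $N(B_n)$.

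Next I would translate this kernel condition into the asserted dependence. A nonzero $\eta$ supported on $I$ lies in $N(B_n)$ exactly when the columns of $B_n$ indexed by $I$, namely $\vc{r}_{i_j} = \begin{bmatrix} 1 \\ \vc{x}_j \end{bmatrix}$ for $j=0,\ldots,k$, are linearly dependent in $\mathbb{R}^{n+1}$. A short computation then shows $\{\vc{r}_{i_0},\ldots,\vc{r}_{i_k}\}$ is linearly dependent if and only if $\{\vc{x}_1-\vc{x}_0,\ldots,\vc{x}_k-\vc{x}_0\}$ is linearly dependent in $\mathbb{R}^n$: any relation $\sum_j c_j \vc{r}_{i_j}=0$ forces $\sum_j c_j = 0$ and $\sum_{j\ge 1} c_j(\vc{x}_j-\vc{x}_0)=0$, and conversely a relation among the differences extends by setting $c_0 = -\sum_{j\ge 1} c_j$. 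Chaining the three equivalences yields that $S$ fails strict $1$-negative type iff the difference vectors are linearly dependent, equivalently $S$ has strict $1$-negative type iff they are linearly independent.

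It remains to match this with the generalized-roundness formulation, i.e. to identify ``generalized roundness one'' with ``$1$-negative but not strictly $1$-negative.'' As $S \subseteq \ell_1$ always has $1$-negative type, $\mathfrak{q}(S)\ge 1$, so I only need that $\mathfrak{q}(S)>1$ is equivalent to strict $1$-negative type. The forward direction is the enhancement/gap phenomenon for finite metric spaces — strict $p$-negative type can be pushed slightly past $p$ (cf.\ \cite{Dous,Sanc}) — so strict $1$-negative type forces $\mathfrak{q}(S)>1$. For the converse I would use the explicit degenerate vector found above: when $S$ fails strict $1$-negative type there is a nonzero $\eta^\ast \in N(D_n)=V_n^\perp$ supported on $I$, and I expect the main obstacle to be proving that this same $\eta^\ast$ obstructs $p$-negative type for every $p>1$ — equivalently that the $p$-th power distance matrix $D_n^{(p)}=[\,d(b_{n,i},b_{n,j})^p\,]$ is positive definite on $V_n^\perp$ for $p>1$ — which would give $\mathfrak{q}(S)\le 1$ and hence $\mathfrak{q}(S)=1$. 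This positivity is already visible for $n=2$, where the one-dimensional space $V_2^\perp$ is spanned by $\eta^\ast=(1,-1,-1,1)$ and $(\eta^\ast)^T D_2^{(p)}\eta^\ast = 4(2^p-2)>0$ for $p>1$; in general it is the genuinely analytic point of the argument, which I would settle either through the Walsh/tensor structure of the cube or by invoking the known value of the supremal $p$-negative type of $H_n$ from the cited literature.
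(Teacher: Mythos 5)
Your first three paragraphs are correct and track the paper's proof closely: the same reduction of strict $1$-negative type to the existence of a nonzero equality vector supported on $I$, the same identification of the equality vectors in $H$ with $N(D_n)$, and the same passage $N(D_n)=N(A_n)=N(B_n)$ followed by elimination of the first coordinate to reach the difference vectors. (Two cosmetic differences: you rederive the needed case of Theorem \ref{vt}(2) directly from the eigenspace decomposition in Lemma \ref{lm1}, which is cleaner than the paper's continuity-of-eigenvalues argument, and you work with the columns $\vc{r}_i$ of $B_n$ directly rather than with the columns $\vc{q}_i$ of $A_n$ and the substitution $\vc{r}_i=M_n^{-1}\vc{q}_i$.)

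The genuine gap is in your final paragraph, in the direction ``not strict $1$-negative type $\Rightarrow\mathfrak{q}(S)=1$''. The claim you propose to prove --- that $D_n^{(p)}=[\,d(b_{n,i},b_{n,j})^p\,]$ is positive definite on $V_n^{\perp}$ for every $p>1$, so that one fixed $\eta^{\ast}$ obstructs $p$-negative type for all $p>1$ --- is false for every $n\ge 3$. Take the parity vector $\eta$ with $\eta_i=(-1)^{w_i}$, where $w_i$ is the number of ones in $b_{n,i}$; it lies in $V_n^{\perp}$ (it is the weight-$n$ Walsh character, orthogonal to the weight-$\le 1$ characters spanning $V_n$), and since $(-1)^{w_i+w_j}=(-1)^{d(b_{n,i},b_{n,j})}$ on the cube, one computes $\eta^{T}D_n^{(p)}\eta=2^{n}\sum_{k=0}^{n}\binom{n}{k}(-1)^{k}k^{p}$. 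At $p=2$ this vanishes for all $n\ge 3$ (an $n$-th finite difference kills the quadratic $k\mapsto k^{2}$), and for $n=3$ it equals $8(3\cdot 2^{p}-3^{p}-3)$, which is strictly negative for $p>2$. So your $n=2$ computation does not generalize, and if, say, $S=H_3$ and the equality vector produced happens to be the parity vector, it witnesses nothing for $p\ge 2$. Your fallback of invoking the known supremal $p$-negative type of $H_n$ cannot close this either: $\mathfrak{q}(H_n)=1$ is a statement about the whole cube and gives no upper bound on $\mathfrak{q}(S)$ for a subset. There are two genuine repairs: (i) the paper's, which is to quote Corollaries 4.3 and 5.11 of \cite{Hlaw} --- for a finite metric space, $p$-negative type at any exponent strictly below the supremal one is automatically strict, so $\mathfrak{q}(S)>1$ forces strict $1$-negative type; or (ii) prove positivity of $\eta^{T}D_n^{(p)}\eta$ only for $p$ in a right-neighborhood of $1$ (this is true and extractable from the Walsh decomposition: the eigenvalue on weight-$t$ characters is, up to positive factors, an alternating $t$-th difference of $x\mapsto x^{p}$, positive for $1<p<2$), and then invoke the classical fact that the set of exponents of negative type of a metric space is an interval of the form $[0,\mathfrak{q}]$, so failure of $p$-negative type near $1$ already gives $\mathfrak{q}(S)\le 1$.
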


\begin{proof} Let $I \subseteq \{ 0 ,1 ,\ldots, 2^n -1 \}$ be such that $S= \{ b_{n,i} : i \in I \}$.
Since $S \subseteq l_1^{(n)}$ and $\mathfrak{q}(l_1^{(n)})=1$ we have that $\mathfrak{q}(S) \ge 1$.
Since $S$ is a finite set, by Corollaries 4.3 and 5.11 of \cite{Hlaw}, we have that
$\mathfrak{q}(S)=1$ if and only if $S$ is not of strict 1-negative type \emph{i.e} $\mathfrak{q}(S)=1$
if and only if there exists $\vc{u} = (u_0 , u_1 , \ldots ,u_{2^n -1})^T \in \mathbb{R}^{2^n}$ such that
$\vc{u} \neq \vc{0}$ and $\vc{u}^T D_n \vc{u} =0 $, $\vc{u} \in H$ with $u_i = 0$ for all $i \notin I$.
By Theorem \ref{vt} and Lemma \ref{lm1} (4), $\mathfrak{q}(S)=1$ if and only if $\vc{u} \in N(D_n)=N(A_n)$
where $\vc{u} \neq \vc{0}$, $u_i = 0$ for all $i \notin I$. Thus $\mathfrak{q}(S)=1$ if and only if
$\{ \vc{q}_i : i \in I \}$ is linearly dependent. By Lemma \ref{lm2}, $M_n^{-1} \vc{q}_i = \vc{r}_i =
\begin{bmatrix} 1 \\ b_{n,i} \end{bmatrix}$. Therefore $\mathfrak{q}(S)=1$ if and only if
$\{ \vc{r}_i : i \in I \}= \{  \begin{bmatrix} 1 \\ b_{n,i} \end{bmatrix} : i \in I \} =
\{   \begin{bmatrix} 1 \\ \vc{x}_i \end{bmatrix}: i = 0,1, \ldots,k \}$ is linearly dependent.
Suppose $\{ \vc{r}_i : i \in I \}$ is linearly dependent. Then there are scalars $a_0,a_1,\ldots,a_k$,
not all, zero such that
\begin{eqnarray*}
\vc{0}&=& \sum_{i=0}^k a_i \begin{bmatrix} 1 \\ \vc{x}_i \end{bmatrix} \\
&=&  \begin{bmatrix} \sum_{i=0}^k a_i  \\ \sum_{i=0}^k a_i  \vc{x}_i \end{bmatrix}.
\end{eqnarray*}
This forces $a_0 = -\sum_{j=1}^k a_j$. Consequently $a_1,\ldots,a_k$ can't be all zero.
Thus we get $\sum_{i=1}^k a_i ( \vc{x}_i - \vc{x}_0 )= \vc{0}$. The whole computation can be reversed as well.
This shows that $\{ \vc{q}_i : i \in I \}$ is linearly dependent if and only if
$\{ \vc{x}_1 - \vc{x}_0,\vc{x}_2 - \vc{x}_0,\ldots,\vc{x}_k - \vc{x}_0 \}$ are linearly dependent
in $\mathbb{R}^n$. This implies that $\mathfrak{q}(S)=1$ if and only if
$\{ \vc{x}_1 - \vc{x}_0,\vc{x}_2 - \vc{x}_0,\ldots,\vc{x}_k - \vc{x}_0 \}$ are linearly dependent in $\mathbb{R}^n$.

The equivalent statement is plainly evident from the preceding argument.
\end{proof}

\begin{cor}\label{cr1}
If $S \subseteq H_n$ is a set of cardinality greater than $n+1$, then $\mathfrak{q}(S)=1$.
\end{cor}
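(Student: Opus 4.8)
The plan is to read this off directly from Theorem \ref{hc}, so the work is essentially organizational rather than computational. First I would fix a labelling $S = \{ \vc{x}_0, \vc{x}_1, \ldots, \vc{x}_k \}$ of the points of $S$, so that the cardinality of $S$ is $k+1$. The hypothesis $|S| > n+1$ then translates immediately into $k+1 > n+1$, that is, $k > n$, or equivalently $k \ge n+1$.

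Next I would turn attention to the $k$ difference vectors $\vc{x}_1 - \vc{x}_0, \vc{x}_2 - \vc{x}_0, \ldots, \vc{x}_k - \vc{x}_0$. These all lie in $\mathbb{R}^n$. Since $k > n$, this is a collection of strictly more than $n$ vectors in an $n$-dimensional space, and such a collection can never be linearly independent; hence $\{ \vc{x}_1 - \vc{x}_0, \ldots, \vc{x}_k - \vc{x}_0 \}$ is linearly dependent in $\mathbb{R}^n$.

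I would then invoke Theorem \ref{hc}, which states precisely that $S$ has generalized roundness one if and only if these difference vectors are linearly dependent. Combining this with the dependence just established yields $\mathfrak{q}(S) = 1$, as claimed.

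I do not anticipate any genuine obstacle here: the corollary is an immediate consequence of Theorem \ref{hc} together with the elementary linear-algebra fact that more than $n$ vectors in $\mathbb{R}^n$ are always linearly dependent. The only point requiring mild care is the off-by-one bookkeeping between $|S| = k+1$ and the count $k$ of difference vectors, which is exactly what makes the threshold $n+1$ rather than $n$.
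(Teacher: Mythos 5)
Your proof is correct and follows exactly the paper's approach: the paper's own (terser) proof likewise just notes that more than $n$ vectors in $\mathbb{R}^n$ must be linearly dependent and then applies Theorem \ref{hc}. Your explicit handling of the bookkeeping between $|S| = k+1$ and the $k$ difference vectors is a welcome clarification of the same argument.
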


\begin{proof}
Any set of $n+1$ vectors is dependent in $\mathbb{R}^n$. The result follows by Theorem \ref{hc}.
\end{proof}

We know from Theorem 1 of \cite{Djok} that every tree can be isometrically embedded in a hypercube.
We present an alternate proof of a lower bound on the number of dimensions of the hypercube required
to isometrically embed a tree with $k$-points. The following Corollary was proved in \cite{Grah}
\begin{cor}\label{cr2}
Let $T_k$ be a tree with $k$ points.
If $T_k$ is isometrically embedded in $H_n$, then $n \ge k-1$.
\end{cor}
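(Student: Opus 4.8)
The plan is to combine the known fact that finite metric trees have strict $1$-negative type with the cardinality bound already extracted in Corollary \ref{cr1} (or, equivalently, with Theorem \ref{hc} itself). First I would recall, as noted in the introduction and established by Hjorth \textit{et al}.\ \cite{Hjo2, Hjo1}, that every finite metric tree---in particular $T_k$ under its path metric---has strict $1$-negative type. Since strict $1$-negative type is defined purely in terms of the metric (Definition \ref{NTGRDEF}(b)), it is an isometric invariant. Hence an isometric embedding of $T_k$ into $H_n$ carries it to a subset $S \subseteq H_n$ that again has strict $1$-negative type, and, the embedding being injective, $S$ consists of exactly $k$ distinct points, so $|S| = k$.

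The most transparent way to finish is to appeal to Theorem \ref{hc} directly. Writing $S = \{ \vc{x}_0, \vc{x}_1, \ldots, \vc{x}_{k-1} \}$, strict $1$-negative type of $S$ is equivalent to linear independence of the $k-1$ difference vectors $\{ \vc{x}_1 - \vc{x}_0, \ldots, \vc{x}_{k-1} - \vc{x}_0 \}$ in $\mathbb{R}^n$. A linearly independent family of $k-1$ vectors in $\mathbb{R}^n$ forces $k - 1 \le n$, which is exactly the desired bound $n \ge k-1$.

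An equivalent packaging would invoke Corollary \ref{cr1} in contrapositive form. The Hlawka-type equivalence used in the proof of Theorem \ref{hc} (from \cite{Hlaw}) says that for $S \subseteq l_1^{(n)}$ one has $\mathfrak{q}(S) = 1$ precisely when $S$ fails to have strict $1$-negative type. Reading Corollary \ref{cr1} backwards therefore yields: if $S$ has strict $1$-negative type then $|S| \le n+1$; applied to our $S$ this gives $k \le n+1$, i.e.\ $n \ge k-1$.

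I do not anticipate a genuine obstacle, since the statement is essentially a repackaging of Theorem \ref{hc}. The only points demanding care are the two external inputs rather than any calculation: that $T_k$ really does have strict $1$-negative type (a cited fact, not reproved here) and that the isometric embedding is injective so that the image genuinely carries $k$ points. Everything beyond that is the elementary linear-algebra observation that independence of $k-1$ vectors in $\mathbb{R}^n$ bounds $k-1$ by $n$.
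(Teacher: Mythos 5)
Your proof is correct and takes essentially the same route as the paper: the paper's proof cites Doust--Weston \cite{Dous} for $\mathfrak{q}(T_k) > 1$ and applies Corollary \ref{cr1} by contradiction, which is exactly the contrapositive packaging you give in your final paragraph. Invoking Theorem \ref{hc} directly via the fact that finite metric trees have strict $1$-negative type (equivalent, for finite spaces, to $\mathfrak{q}(T_k)>1$) is the same argument stated directly rather than by contradiction.
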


\begin{proof}
We prove this by contradiction. By a result of \cite{Dous} $\mathfrak{q}(T_k) > 1$.
If $n < k - 1$, then $k > n + 1 $. Since $T_k \subseteq H_n$ with $k > n+1$,
by Corollary \ref{cr1} we get that $\mathfrak{q}(T_n)=1$, a contradiction.
\end{proof}

The bound given by Corollary \ref{cr2} is tight. It is shown by Graham and Pollak \cite{Grah}
that there exists an isometric embedding of $T_k$ in $H_{k+1}$.

If a finite metric space has strict $p$-negative type, then it must have $q$-negative type for
some $q > p$ by \cite[Corollary 4.2]{Hlaw}. As $H_{n}$ has only finitely many subsets (of strict
$1$-negative type), we deduce one more corollary from Theorem \ref{hc}.

\begin{cor}\label{cr3}
Let $n \geq 1$ be given. Let $\mathscr{Q}$ denote the set of all subsets of $H_{n}$ that
have strict $1$-negative type. Then there exists a $q > 1$ such that $\mathfrak{q}(S) \geq q$
for all $S \in \mathscr{Q}$.
\end{cor}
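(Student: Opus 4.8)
The plan is to exploit the finiteness of the ambient cube together with the strict-improvement phenomenon for negative type. First I would observe that $H_n$ is a finite metric space with exactly $2^n$ points, so it has only finitely many subsets; in particular the collection $\mathscr{Q}$ of those subsets possessing strict $1$-negative type is itself a \emph{finite} set. (If $\mathscr{Q}$ happens to be empty the assertion holds vacuously for any $q > 1$, so I may assume $\mathscr{Q} \neq \emptyset$.)

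Next, for an arbitrary fixed $S \in \mathscr{Q}$, I would invoke the strict-improvement result quoted just before the statement, namely Corollary 4.2 of \cite{Hlaw}: since $S$ is a finite metric space of strict $1$-negative type, it must have $q_S$-negative type for some $q_S > 1$. By Theorem \ref{LTWthm} this means $q_S \in gr(S)$, and hence $\mathfrak{q}(S) = \sup\{ p : p \in gr(S) \} \ge q_S > 1$. Thus every member of $\mathscr{Q}$ has generalized roundness strictly exceeding $1$.

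Finally, I would set $q = \min\{ \mathfrak{q}(S) : S \in \mathscr{Q} \}$. Because $\mathscr{Q}$ is finite, this is a minimum over a finite set of real numbers, so it is attained at some $S_0 \in \mathscr{Q}$; since $\mathfrak{q}(S_0) > 1$ by the previous step, we obtain $q > 1$ and $\mathfrak{q}(S) \ge q$ for every $S \in \mathscr{Q}$, which is exactly the desired uniform bound.

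The only genuine subtlety — and the single place where finiteness is indispensable — is this last step. Knowing merely that each individual $\mathfrak{q}(S)$ exceeds $1$ does not by itself yield a uniform lower bound, since an infinite family of values each greater than $1$ could still have infimum equal to $1$. It is precisely the finiteness of $\mathscr{Q}$, which follows from the finiteness of $H_n$ (and which Theorem \ref{hc} describes concretely via a linear-independence criterion on the cube), that upgrades the pointwise strict inequalities $\mathfrak{q}(S) > 1$ into a uniform spectral gap $q > 1$. I expect no computational obstacle here; the entire content is the passage from ``each $> 1$'' to ``bounded below by some $q > 1$'' justified by finiteness.
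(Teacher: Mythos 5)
Your proposal is correct and follows essentially the same route as the paper: the paper's one-line proof, read together with the paragraph immediately preceding the corollary, is exactly your argument --- apply \cite[Corollary 4.2]{Hlaw} to each $S$ of strict $1$-negative type to get $\mathfrak{q}(S) > 1$ via Theorem \ref{LTWthm}, then use finiteness of $\mathscr{Q}$ to pass to a uniform bound $q > 1$. You have merely written out the details (including the harmless empty-case remark) that the paper leaves implicit.
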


\begin{proof}
Apply Theorems \ref{LTWthm} and \ref{hc} with $p = 1$.
\end{proof}

\section{Acknowledgement}
The research presented in this paper was undertaken and completed at the 2011
Cornell University \textit{Summer Mathematics Institute} (SMI). I would
like to thank the Department of Mathematics and the Center for Applied Mathematics
at Cornell University for supporting this project, and the National Science
Foundation for its financial support of the SMI through NSF grant DMS-0739338.
I also thank Anthony Weston for directing this research at the SMI.

\bibliographystyle{amsalpha}

\end{document}